\newtheorem{theorem}{Theorem}
\newtheorem{lemma}[theorem]{Lemma}
\begin{document}

\title{The $\mathbb{F}_2$-cohomology rings of $\mathbb{S}ol^3$-manifolds}
 
\author{J.A.Hillman}
\address{School of Mathematics and Statistics\\
     University of Sydney, NSW 2006\\
      Australia }

\email{jonathan.hillman@sydney.edu.au}

\begin{abstract}
We compute the rings $H^*(N;\mathbb{F}_2)$ for $N$ a 
closed $\mathbb{S}ol^3$-manifold, 
and then determine the Borsuk-Ulam indices $BU(N,\phi)$ with 
$\phi\not=0$ in $H^1(N;\mathbb{F}_2)$.
\end{abstract}

\keywords{Borsuk-Ulam Theorem, cohomology, $\mathbb{S}ol^3$-manifold}

\subjclass{57M25}

\maketitle

The Borsuk-Ulam Theorem states that any continuous function 
$f:S^n\to\mathbb{R}^n$ takes the same value at some antipodal pair of points.
This may be put in a broader context as follows.
Let $N$ be an $n$-manifold and let $N_\phi$ be the double cover 
associated to an epimorphism $\phi:\pi\to{Z/2Z}$.
Let $t_\phi$ be the covering involution.
The {\it Borsuk-Ulam index\/} $BU(N,\phi)$ is the maximal value of $k$ 
such that for all maps $f:N_\phi\to\mathbb{R}^k$ 
there is an $x\in{N_\phi}$ with $f(x)=f(t_\phi(x))$.
Then the Borsuk-Ulam Theorem is equivalent to the assertion that $BU(RP^n,\alpha)=n$, 
where $\alpha:\pi_1(RP^n)\to{Z/2Z}$ is the canonical epimorphism.

In low dimensions this invariant may be determined cohomologically,
and is known for many pairs $(N,\phi)$, with $N$ a Seifert fibred 3-manifold, 
including all those with geometry $\mathbb{E}^3$, $\mathbb{S}^3$,
$\mathbb{S}^2\times\mathbb{E}^1$, $\mathbb{N}il^3$
or $\mathbb{H}^2\times\mathbb{E}^1$ \cite{GHZ,BGHZ}.
Here we shall determine this invariant for all such pairs
with $N$ a closed $\mathbb{S}ol^3$-manifold.
This follows easily once we know
the mod-2 cohomology rings of such manifolds.
We compute these using Poincar\'e duality and elementary 
properties of cup-product in the low-degree cohomology of groups.
(Our approach can also be applied to $\mathbb{E}^3$- and
$\mathbb{N}il^3$-manifolds.)

I would like to thank the organizers of the XVIII Encontro Brasiliero de Topologia for the invitation to their meeting in Aguas de Lindoias, 
in July 2012, which lead to this work,
and S.T.Martins, for sending me a copy of his PhD thesis \cite{Ma12}.
The use of the extraspecial 2-group $E$ (introduced before Lemma 6 below)
was prompted by the work of J.F.Carlson on the cohomology rings of 2-groups
\cite{jfc}.

\section{$\mathbb{S}ol^3$-manifolds and their groups}

Let $M$ be a closed $\mathbb{S}ol^3$-manifold.
Then $\pi=\pi_1(M)$ has an unique maximal abelian normal subgroup
$\sqrt\pi$,
which is free abelian of rank 2.
(This subgroup is in fact the Hirsch-Plotkin radical \cite{Ro} of $\pi$.)
The quotient $\pi/\sqrt\pi$ is virtually $\mathbb{Z}$
(i.e., has two ends), and so is an extension of
$\mathbb{Z}$ or $D_\infty=Z/2Z*Z/2Z$ by a finite normal subgroup.
The preimage of this finite normal subgroup is torsion-free, 
and so is either $\mathbb{Z}^2$ or  $\mathbb{Z}\rtimes_{-1}\mathbb{Z}$
(the Klein bottle group).
Since $Out(\mathbb{Z}\rtimes_{-1}\mathbb{Z})$ is finite
and $\pi$ is not virtually abelian,
this preimage must be $\sqrt\pi$.
Hence $\pi/\sqrt\pi\cong\mathbb{Z}$ or $D_\infty$.

Suppose first that $\pi/\sqrt\pi\cong\mathbb{Z}$.
Then $M$ is the mapping torus of a self-homeomorphism of $T=S^1\times{S^1}$,
and $\pi\cong\mathbb{Z}^2\rtimes_\Theta\mathbb{Z}$,
where $\Theta=\left(\begin{smallmatrix}
a&c\\ 
b&d
\end{smallmatrix}\right)\in{GL}(2,\mathbb{Z})$.
Thus $\pi$ has a presentation
\[
\langle{t,x,y}\mid{txt^{-1}=x^ay^b},~tyt^{-1}=x^cy^d,~xy=yx\rangle.
\]
Let $\varepsilon=\det\Theta=\pm1$ and $\tau=tr\Theta=a+d$.
Then $M$ is orientable if and only if $\varepsilon=1$, 
in which case $|\tau|>2$, 
since $\pi$ is not virtually nilpotent.
Let $\theta$ be a root of  ${\det(\Theta-XI_2)}=X^2-\tau{X}+\varepsilon$,
the characteristic polynomial of $\Theta$.
Then $\theta$ is a unit in the quadratic number field $\mathbb{Q}[\theta]$, 
and $\sqrt\pi$ is isomorphic to an ideal $I$ in the ring $\mathbb{Z}[\theta]$.
(The latter may not be the full ring of integers in $\mathbb{Q}[\theta]$!)

There is a converse.
Let $[I]$ denote the isomorphism class of the ideal $I$.
The Galois involution of the quadratic field $\mathbb{Q}[\theta]$
acts on the ring $\mathbb{Z}[\theta]$,
since $\bar\theta=\tau-\theta\in\mathbb{Z}[\theta]$,
and hence acts on the set of ideal classes.

\begin{theorem}
Let $\alpha$ be a quadratic algebraic unit which is not a root of unity, 
and let $J$ be a nonzero ideal in $\mathbb{Z}[\alpha]$.
Let $A$ be the automorphism of $J\cong\mathbb{Z}^2$ 
given by left multiplication by $\alpha$, 
and let $\pi=J\rtimes_A\mathbb{Z}$.
Then 

\begin{enumerate}
\item $\pi$ is a $\mathbb{S}ol^3$-group;

\item the groups corresponding to two such pairs $(\alpha,J)$ 
and $(\beta,K)$ are isomorphic if and only if either
$\beta=\alpha$ or $\alpha^{-1}$ and $[K]=[J]$,
or $\beta=\bar\alpha$ or $\bar\alpha^{-1}$ 
and $[K]=\overline{[J]}$;

\item  
given $\alpha$, the number of isomorphism classes of such groups $\pi$ 
 is finite.
\end{enumerate}
\end{theorem}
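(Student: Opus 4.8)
The plan is to handle the three parts in turn, the substance lying in~(2). For~(1), I would first note that a nonzero ideal $J$ of the order $\mathbb{Z}[\alpha]$ is a full-rank sublattice of $\mathbb{Q}[\alpha]$, so $J\cong\mathbb{Z}^2$; and since $\bar\alpha=\mathrm{tr}(\alpha)-\alpha\in\mathbb{Z}[\alpha]$ while $N(\alpha)=\pm1$, also $\alpha^{-1}\in\mathbb{Z}[\alpha]$, so multiplication by $\alpha$ is an automorphism $A\in GL(2,\mathbb{Z})$ of $J$ whose characteristic polynomial is the minimal polynomial $X^{2}-\mathrm{tr}(\alpha)X+N(\alpha)$ of $\alpha$. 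I would then check that the hypothesis that $\alpha$ is not a root of unity forces this polynomial to have two distinct real roots, neither equal to $\pm1$: an imaginary quadratic unit has $|\alpha|^{2}=N(\alpha)=1$ and so is a root of unity by Kronecker's theorem, while the finitely many real pairs $(\mathrm{tr}(\alpha),N(\alpha))$ that one must exclude are exactly those for which $\alpha$ is a root of unity. Hence $A$ is hyperbolic and $\pi=J\rtimes_A\mathbb{Z}$ is a $\mathbb{S}ol^3$-group by the discussion preceding the theorem.

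For~(2), the engine is that $\sqrt\pi=J$ is characteristic in $\pi$, being its unique maximal abelian normal subgroup; hence any isomorphism $f\colon J\rtimes_A\mathbb{Z}\to K\rtimes_B\mathbb{Z}$ restricts to a group isomorphism $\phi\colon J\to K$ and induces $\pm1$ on the quotients $\mathbb{Z}$, so $f(t)=k_{0}\,t'^{\varepsilon}$ for some $k_{0}\in K$ and $\varepsilon=\pm1$. Since $K$ is abelian, $k_{0}$ drops out of the conjugation $f(t)f(x)f(t)^{-1}$, and the requirement that $f$ be a homomorphism reduces to the single relation $\phi A\phi^{-1}=B^{\varepsilon}$; that is, $\phi$ intertwines multiplication by $\alpha$ on $J$ with multiplication by $\beta^{\varepsilon}$ on $K$. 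Tensoring $\phi$ with $\mathbb{Q}$ and evaluating at $1$ and $\alpha$ shows $\beta^{\varepsilon}$ satisfies the minimal polynomial of $\alpha$, so $\beta^{\varepsilon}\in\{\alpha,\bar\alpha\}$ --- this is the source of the four possibilities for $\beta$. If $\beta^{\varepsilon}=\alpha$ then $\phi$ is an isomorphism of $\mathbb{Z}[\alpha]$-modules, which exists if and only if $[K]=[J]$; if $\beta^{\varepsilon}=\bar\alpha$ then, precomposing $\phi$ with the Galois automorphism $\sigma$ of $\mathbb{Q}[\alpha]$ (which carries $J$ to an ideal $\bar J$ with $[\bar J]=\overline{[J]}$) reduces the question to the existence of a $\mathbb{Z}[\alpha]$-module isomorphism $\bar J\to K$, which holds if and only if $[K]=\overline{[J]}$. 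Sorting these by the value of $\varepsilon$ yields precisely the four listed cases. For the converse I would, in each case, write down $f$ explicitly --- a suitable module isomorphism $\phi$ together with $t\mapsto t'^{\varepsilon}$ --- and verify directly that the conjugation relations are respected.

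For~(3), part~(2) shows that for a fixed $\alpha$ the groups $\pi$ in question are classified, up to isomorphism, by the ideal class $[J]$ of $\mathbb{Z}[\alpha]$ modulo the involution $[J]\mapsto\overline{[J]}$, so the assertion reduces to the finiteness of the number of ideal classes of the order $\mathbb{Z}[\alpha]$. I would deduce this from standard facts about orders: each nonzero ideal $I$ has a multiplier ring $\mathcal{O}_{I}=\{x\in\mathbb{Q}[\alpha]:xI\subseteq I\}$, an order lying between $\mathbb{Z}[\alpha]$ and the maximal order $\mathcal{O}$; there are only finitely many such orders, since $\mathcal{O}$ is finite over $\mathbb{Z}[\alpha]$; $I$ is an invertible $\mathcal{O}_{I}$-ideal; and each Picard group $\mathrm{Pic}(\mathcal{O}_{I})$ is finite. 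Alternatively one simply cites the classical finiteness of the class number of a quadratic order.

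The step I expect to be the main obstacle is the bookkeeping inside~(2): carrying out the translation from the group relation $\phi A\phi^{-1}=B^{\varepsilon}$ into module-theoretic terms, and then matching the four elements $\alpha,\alpha^{-1},\bar\alpha,\bar\alpha^{-1}$ against the two ideal-class conditions while keeping track of $\varepsilon$. Since $\bar\alpha=N(\alpha)\,\alpha^{-1}$, exactly which of these four elements coincide depends on whether $N(\alpha)=1$ or $-1$, and in the converse direction one must check that the $f$ produced really is a well-defined isomorphism in each of the cases.
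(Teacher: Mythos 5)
Your proposal is correct and follows essentially the same route as the paper: restrict an isomorphism to the characteristic subgroup $\sqrt\pi=J$, note it induces $\pm1$ on the quotient $\mathbb{Z}$, match characteristic polynomials to get $\beta^{\varepsilon}\in\{\alpha,\bar\alpha\}$, translate into $\mathbb{Z}[\alpha]$-module isomorphisms and ideal classes, and conclude finiteness from the finiteness of ideal classes of $\mathbb{Z}[\alpha]$. The only cosmetic difference is in part (3), where you invoke finiteness of the Picard groups of the intermediate orders while the paper cites the Jordan--Zassenhaus theorem; both deliver the same standard fact.
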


\begin{proof}
The group $\pi$ is the fundamental group of the mapping torus of 
a self-homeomorphism of $T$.
If $\alpha$ is not a root of unity then this is a $\mathbb{S}ol^3$-manifold.

Let $\pi=\langle{J,t}|tjt^{-1}=\alpha{j}~\forall{j}\in{J}\rangle$ 
and $\widetilde\pi=\langle{k,\widetilde{t}}|
\widetilde{t}j\widetilde{t}^{-1}=\beta{k}~\forall{k}\in{K}\rangle$ 
be two such groups.
An isomorphism $f:\pi\cong\widetilde\pi$ restricts to an isomorphism
$f_J:J=\sqrt\pi\cong\sqrt{\widetilde\pi}=K$.
Hence it induces an isomorphism 
$\pi/\sqrt\pi\cong\widetilde\pi/\sqrt{\widetilde\pi}$,
and so $f(t)=\widetilde{t}^\eta{k}$, for some $\eta=\pm1$ and $k\in{K}$.
We may assume that $f(t)=\widetilde{t}$, 
after replacing $\beta$ by $\beta^{-1}$, if necessary.
The characteristic polynomials of the automorphism of $J$ and $K$ 
induced by conjugation by $t$ and $\widetilde{t}$ (respectively) must 
then agree.
Thus either $\beta=\alpha$ and $f_J$ is 
an isomorphism of $\mathbb{Z}[\alpha]$-modules,
or $\beta=\bar\alpha$ and $\overline{f_J}:J\to\overline{K}$ is an 
an isomorphism of $\mathbb{Z}[\alpha]$-modules.
The converse is similarly straightforward.

The group $\pi$ is determined up to a finite ambiguity by 
$\alpha$ (equivalently, by the polynomial $t^2-\tau{t}+\varepsilon$),
since $\mathbb{Z}[\alpha]$ has finitely many ideal classes,
by the Jordan-Zassenhaus Theorem.
\end{proof}

If $\pi/\sqrt\pi\cong{D_\infty}$ then $\pi\cong{B*_TC}$, 
where $B$ and $C$ are torsion-free, $T\cong\mathbb{Z}^2$ and 
$[B:T]=[C:T]=2$.
Thus $M$ is the union of two twisted $I$-bundles.
Since $\beta_1(\pi;\mathbb{Q})=0$ and $\chi(M)=0$, $M$ is orientable,
and so $B$ and $C$ must be copies of the Klein bottle group.
Hence $M$ is the union of two copies of the mapping cylinder 
of the double cover of the Klein bottle.
The double cover of $M$ corresponding to the preimage 
of $\sqrt{D_\infty}$ in $\pi$ is a mapping torus.

In particular, $\pi$ has a presentation
\[\langle
u,v,y,z\mid uyu^{-1}=y^{-1}\!,~vzv^{-1}=z^{-1}\!,~yz=zy,~v^2=u^{2a}y^b,~
z=u^{2c}y^d
\rangle,
\]
where
$\left(\begin{smallmatrix}
a&c\\ 
b&d
\end{smallmatrix}\right)\in{GL}(2,\mathbb{Z})$ 
corresponds to the identification of $\sqrt{C}$ with $T=\sqrt{B}$.
This presentation simplifies immediately to
\[
\langle
u,v,y\mid uyu^{-1}=y^{-1},~v^2=u^{2a}y^b,~
vu^{2c}y^dv^{-1}=u^{-2c}y^{-d}
\rangle.
\]
Hence $\pi^{ab}\cong{Z/4cZ}\oplus{Z/4Z}$ if $b$ is odd,
and $\pi^{ab}\cong{Z/4cZ}\oplus(Z/2Z)^2$ if $b$ is even.
Let $x=u^2$.
Then conjugation by $uv$ acts on $\langle{x,y}\rangle\cong\mathbb{Z}^2$
via 
$\Psi=\eta\left(\begin{smallmatrix}
ad+bc&2ac\\ 
2bd&ad+bc
\end{smallmatrix}\right)$, where $\eta=ad-bc=\pm1$.
Hence $\det\Psi=1$, $\Psi\equiv{I_2}$ {\it mod} $(2)$
and $tr\Psi\equiv2$ {\it mod} $(4)$.
(These conditions are not independent,
for if $\Psi=I_2+2N$ then $tr\Psi=2+2trN$ and
$\det\Psi\equiv1+2trN$ {\it mod} (4), 
so $trN$ is even and $tr\Psi\equiv2$ {\it mod} $(4)$ 
if also $\det\Psi=1$.)
Moreover, $abcd\not=0$, since $M$ is not flat.

Conversely, any  $\left(\begin{smallmatrix}
a&c\\ 
b&d
\end{smallmatrix}\right)\in{GL}(2,\mathbb{Z})$ with $abcd\not=0$
gives rise to such a $\mathbb{S}ol^3$-manifold,
for then $|tr\Psi|=2|ad+bc|\geq6$.
Moreover,
suppose 
$P=\left(\begin{smallmatrix}
2k+1&2m\\ 
2n&2k+1
\end{smallmatrix}\right)\in{SL(2,\mathbb{Z})}$,
where $mn\not=0$.
Then $k(k+1)=mn$, and so we may write $m=m_1m_2$ and $n=n_1n_2$, 
with $k=m_1n_1$ and $k+1=m_2n_2$.
The $\mathbb{S}ol^3$-rational homology sphere 
corresponding to  $\left(\begin{smallmatrix}
m_1&-m_2\\ 
-n_2&n_1
\end{smallmatrix}\right)\in{GL}(2,\mathbb{Z})$, 
is doubly covered by the mapping torus asociated to $P$.

The above matrix calculations show that
a quadratic unit $\alpha$ is realized by such a $\mathbb{S}ol^3$-manifold
if and only if $\alpha\bar\alpha=1$,
$|\alpha+\bar\alpha|>2$ and $\alpha+\bar\alpha\equiv2$ {\it mod} $(4)$.
Determining the possible ideal classes represented by
$\sqrt\pi$ is more complicated.

\begin{theorem}
Let $\alpha$ be a quadratic unit which is not a root of unity, 
and let $J$ be a nonzero ideal in $\mathbb{Z}[\alpha]$.
Let $A$ be the automorphism of $J\cong\mathbb{Z}^2$ 
given by left multiplication by $\alpha$, 
and let $\kappa=J\rtimes_A\mathbb{Z}$.
Then 

$\kappa$ is a subgroup of index $2$ in a $\mathbb{S}ol^3$-group $\pi$ 
with $\pi/\sqrt\pi\cong{D_\infty}$

$\Leftrightarrow\,\alpha\bar\alpha=1$,
$\alpha\equiv1$ {\it mod} $2\mathbb{Z}[\alpha]$ and there are $\lambda,\mu\not=0\in\mathbb{Z}[\alpha]$ 
and $v,w\in{J}$ such that $\lambda\overline{J}=\mu{J}$,
$\lambda\bar{v}=\mu{v}$ and
$\lambda\bar{w}=\bar\alpha\mu{w}$,
but $\bar\lambda{v}\not=\bar\lambda{j}+\bar\mu\bar{j}$ 
and $\bar\lambda{w}\not=\bar\lambda{j}+\alpha\bar\mu\bar{j}$
for any $j\in{J}$.

Given $\alpha$, the number of isomorphism classes 
of such groups $\pi$  is finite.
\end{theorem}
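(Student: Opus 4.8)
The plan is, whenever such a $\pi$ exists, to realise it as an amalgam of two Klein bottle groups over $\sqrt\pi$ (as in the discussion above), read the stated conditions off the resulting presentation, and conversely to build $\pi$ from the data $(\alpha,J,\lambda,\mu,v,w)$ by adjoining a single ``reflection'' to $\kappa$; the finiteness assertion will then follow from the Jordan--Zassenhaus theorem, exactly as in Theorem 1.

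For the implication $\Rightarrow$: a short argument with maximal abelian normal subgroups shows that $J=\sqrt\pi$ and that $\kappa$ is forced to be the canonical double cover, i.e.\ the preimage of $\sqrt{D_\infty}$ in $\pi$. Thus $\pi=\langle\kappa,s\rangle$ with $s^2\in\kappa$, and since $s$ maps to a reflection in $D_\infty$ inverting $\sqrt{D_\infty}$, conjugation by $s$ carries a generator $t$ of $\kappa/J$ to $j_1t^{-1}$ for some $j_1\in J$ and restricts to an automorphism $\sigma$ of $J$ with $\sigma A\sigma^{-1}=A^{-1}$. Comparing characteristic polynomials forces $\det A=1$, i.e.\ $\alpha\bar\alpha=1$ (here $\mathrm{tr}\,A\neq0$ because $\alpha$ is not a root of unity); then $\bar\alpha=\alpha^{-1}$ and the relation becomes $\sigma(\alpha j)=\bar\alpha\,\sigma(j)$, so after extending scalars to $\mathbb{Q}[\alpha]$ one gets $\sigma(j)=c\bar j$ for a fixed $c=\lambda/\mu\in\mathbb{Q}[\alpha]^\times$, and $\sigma(J)=J$ gives $\lambda\overline{J}=\mu J$. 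The congruence $\alpha\equiv1\pmod{2\mathbb{Z}[\alpha]}$ is the module-theoretic translation of the condition ``$\Psi\equiv I_2\bmod(2)$'' from the Klein bottle description: conjugation by each Klein bottle generator is $\mathrm{diag}(1,-1)$ in a suitable basis of $\sqrt\pi$, and $A$ is their product. Setting $v=s^2$ and $w=(s^{-1}t)^2$ in $J$ (so that $j_1=v+w$ up to the sign conventions in force), the demand that $\sigma$ fix $s^2$ gives $\lambda\bar v=\mu v$ (which, as $v\neq0$, already forces $N(\lambda)=N(\mu)$, i.e.\ $\sigma^2=\mathrm{id}_J$), and the demand that conjugation by $s$ be an automorphism of $\kappa$ whose square is conjugation by $v$ gives $\lambda\bar w=\bar\alpha\mu w$. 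Finally $\pi$ is torsion-free exactly when no element $\gamma s$ with $\gamma=jt^n\in\kappa$ squares to $1$; the decisive observation is that conjugation by $t$ changes the exponent $n$ by $\pm2$ (since $ts t^{-1}$ lies in the coset $t^2\kappa\, s$), so this reduces to the two cases $n\equiv0$ and $n\equiv1\pmod2$, which unwind (after multiplying through by $\bar\mu$ and dividing by $\lambda$) to the two displayed inequalities $\bar\lambda v\neq\bar\lambda j+\bar\mu\bar j$ and $\bar\lambda w\neq\bar\lambda j+\alpha\bar\mu\bar j$ for all $j\in J$.

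For the implication $\Leftarrow$ one runs this in reverse: put $\kappa=J\rtimes_A\mathbb{Z}$, define $\phi\in\mathrm{Aut}(\kappa)$ by $\phi(j)=(\lambda/\mu)\bar j$ for $j\in J$ and $\phi(t)=(v+w)t^{-1}$, use the three equalities $\lambda\overline{J}=\mu J$, $\lambda\bar v=\mu v$, $\lambda\bar w=\bar\alpha\mu w$ to verify that $\phi$ is a well-defined automorphism with $\phi(v)=v$ and $\phi^2$ equal to conjugation by $v$, form $\pi=\langle\kappa,s\mid sxs^{-1}=\phi(x),\ s^2=v\rangle$, and use the two inequalities to verify that $\pi$ has no $2$-torsion outside $\kappa$. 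Since $\phi$ inverts $t$ modulo $J$ and $\alpha$ is not a root of unity, $\pi/J\cong D_\infty$ with $\sqrt\pi=J$ (as $\kappa$ is not abelian); being torsion-free polycyclic of Hirsch length $3$ and not virtually nilpotent, $\pi$ is a $\mathbb{S}ol^3$-group, and $\kappa$ is the required index-$2$ subgroup.

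For the finiteness: given $\alpha$, the ideal $J$ lies in one of the finitely many ideal classes of $\mathbb{Z}[\alpha]$ (Jordan--Zassenhaus, as in Theorem 1), so $\kappa$ has only finitely many isomorphism types; for a fixed $\kappa$ the element $c$ is determined by $\lambda\overline{J}=\mu J$ up to a unit of $\mathbb{Z}[\alpha]$, and modulo the evident isomorphisms the admissible pairs $(v,w)$ fall into finitely many classes, so only finitely many $\pi$ arise. The step I expect to be hardest is the converse direction --- in particular the torsion-freeness check, i.e.\ seeing that the a priori infinite family of conditions $(\gamma s)^2\neq1$ collapses to just the two stated inequalities --- together with the somewhat fiddly translation back and forth between the group relations and the identities among ideals and conjugate-linear maps in the possibly non-maximal order $\mathbb{Z}[\alpha]$.
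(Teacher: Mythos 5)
Your treatment of the equivalence is essentially the paper's own argument: you adjoin a single element $u$ (your $s$) to $\kappa$, observe that $A$ conjugate to $A^{-1}$ with $\mathrm{tr}\,A\neq0$ forces $\det A=1$, encode conjugation by $u$ on $J$ as the conjugate-semilinear map $j\mapsto(\mu/\lambda)\bar j$, get $\alpha\equiv1$ mod $2\mathbb{Z}[\alpha]$ from the fact that $\langle J,u\rangle$ and $\langle J,tu\rangle$ are Klein bottle groups, take $v=u^2$ and $w$ the square of an element in the other reflection coset, and reduce torsion-freeness to $v\notin(I+B)J$ and $w\notin(I+AB)J$. Your conjugation-by-$t$ argument for why the infinitely many conditions $(\gamma s)^2\neq1$ collapse to the two one-parameter families is correct and in fact makes explicit a point the paper leaves implicit. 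One small bookkeeping slip: with your choice $w=(s^{-1}t)^2$ the fixed-point condition comes out as $\lambda\bar w=\alpha\mu w$, not the stated $\lambda\bar w=\bar\alpha\mu w$; the paper's choice $w=(tu)^2$ (which equals $\alpha$ times yours) gives the stated form, so this is repaired by rescaling $w$ and does not affect existence.

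The finiteness assertion is where you genuinely diverge from the paper, and your argument there has a gap. You reduce to finitely many $\kappa$ correctly, but then claim that for fixed $\kappa$ ``the element $c$ is determined up to a unit'' and ``modulo the evident isomorphisms the admissible pairs $(v,w)$ fall into finitely many classes.'' Neither is evident: $\mathbb{Z}[\alpha]^\times$ is infinite (it contains $\alpha^{\mathbb{Z}}$), and $v$, $w$ range over infinite subsets of $J$, so one must actually exhibit the equivalences (e.g.\ replacing $u$ by $ju$ changes $v$ by $(I+B)j$, so the classes of $v$ live in a quotient of the form $\ker(I-B)/\mathrm{im}(I+B)$, a finite Tate-cohomology-type group) and check that only finitely many orbits of the data $(\lambda/\mu,v,w)$ survive under $\mathrm{Aut}(\kappa)$. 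The paper avoids all of this: since $\kappa$ has trivial centre, extensions of $Z/2Z$ by $\kappa$ are classified by the induced map to $Out(\kappa)$, and $Out(\kappa)$ is finite (Theorem 8.10 of \cite{Hi}), so finitely many $\pi$ arise over each of the finitely many $\kappa$. You should either supply the missing orbit count or substitute the $Out(\kappa)$ argument.
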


\begin{proof}
Suppose that $\pi=\langle\kappa,u\rangle$ with $\pi/\sqrt\pi\cong{D_\infty}$
and $[\pi:\kappa]=2$,
and that $t\in\kappa$ generates $\kappa$ {\it mod} $\sqrt\pi$.
Then $t^{-1}$ is conjugate to $t$,
and so $A$ and $A^{-1}$ have the same characteristic polynomial.
Since $trA\not=0$, $\alpha\bar\alpha=\det{A}=1$.

Let $B(j)=uju^{-1}$ and $f(j)=\overline{B(j)}$, for all $j\in{J}$.
Then $B$ is an isomorphism of groups and $f:J\to\overline{J}$ 
is an isomorphism of $\mathbb{Z}[\alpha]$-modules.
Let $v=u^2$ and $w=(tu)^2$.
Then $B^2=(AB)^2=I$, $Bv=v$ and $ABw=w$.
Since $A$ has infinite order, $B\not=I$, and so $\det{B}=-1$.
Moreover, $B\equiv{AB}\equiv{I_2}$ {\it mod} (2),
since $\langle{J,u}\rangle$ and $\langle{J,tu}\rangle\cong\pi_1(Kb)$.
Therefore $A\equiv{I_2}$ {\it mod} (2) also, 
and so $\alpha\equiv1$ {\it mod} $2\mathbb{Z}[\alpha]$.

Since $\pi$ is torsion free,
$(uj)^2$ and $(tuj)^2$ are nontrivial, for all $j\in{J}$.
Equivalently, $v\not\in(I+B)J$ and $w\not\in(I+AB)J$. 

The isomorphism $f$ extends to an automorphism $f_\mathbb{Q}=id_\mathbb{Q}\otimes{f}$ of $\mathbb{Q}[\alpha]$, 
as a vector space over itself.
We may write $f_\mathbb{Q}(1)=\frac\mu\lambda$, 
for some nonzero $\lambda,\mu\in\mathbb{Z}[\alpha]$.
(Note that $\frac{\mu\bar\mu}{\lambda\bar\lambda}=
\det{f_\mathbb{Q}}=-\det{B}=1$.) 
Then $\lambda{f(j)}=\mu{j}$, for all $j\in{J}$,
since $\mathbb{Z}[\alpha]$ is an integral domain.
The linear conditions on $v$ and $w$ become
$\lambda\bar{v}=\mu{v}$ and $\lambda\bar{w}=\bar\alpha\mu{w}$,
while  $\bar\lambda{v}\not=\bar\lambda{j}+\bar\mu\bar{j}$
and $\bar\lambda{w}\not=\bar\lambda{j}+\alpha\bar\mu\bar{j}$
for any $j\in{J}$.

Conversely, suppose that these conditions hold.
Let $Bj=\overline{\frac\mu\lambda{j}}$, for all $j\in{J}$, 
and let $\pi$ be the group with presentation
\[
\langle\kappa,u|u^2=v,~utu^{-1}=t^{-1}wv^{-1},
~uju^{-1}=Bj~\forall{j}\in{J}\rangle.
\]
Then $\pi$ is torsion free and has $\kappa$ as a subgroup of index 2. 
and so is a $\mathbb{S}ol^3$-group.
Clearly $\pi/\sqrt\kappa\cong{D_\infty}$,
and so $\sqrt\kappa\leq\sqrt\pi\leq\kappa$. 
Hence $\sqrt\pi=\sqrt\kappa$ and $\pi/\sqrt\pi\cong{D_\infty}$.

Since $\kappa$ has trivial centre the extensions of $Z/2Z$ by $\kappa$ 
are determined by the image in $Out(\kappa)$ 
of the action of $Z/2Z$ on $\kappa$.
Since There are finitely many groups $\kappa$ realizing $\alpha$, 
by Theorem 1, and $Out(\kappa)$ is finite,
by Theorem 8.10 of \cite{Hi}, there are finitely many
such groups $\pi$.
\end{proof}

In particular, the ideal class $[J]$ must be fixed by the Galois involution.
For example, 
if $\alpha\bar\alpha=1$ and $\alpha\equiv1$ {\it mod} $2\mathbb{Z}[\alpha]$ 
then $J=\mathbb{Z}[\alpha]$, 
$v=1$ and $w=\alpha$ satisfy the other conditions,
with $\lambda=\mu=1$.

Note that if $\alpha$ is a quadratic unit such that $\alpha\bar\alpha=1$
and $\delta=\alpha-1\in2\mathbb{Z}[\alpha]$ then
$\bar\delta=-\alpha^{-1}\delta\in2\mathbb{Z}[\alpha]$ also, and so
$\alpha+\bar\alpha=2-\delta\bar\delta\equiv2$ {\it mod} (4).
(This is equivalent to an earlier matrix argument.)

Every subgroup of finite index in $\pi$ can be generated by three elements,
while proper subgroups of infinite index need at most two generators.
If a nontrivial normal subgroup $N$ has infinite index 
in $\pi$ then it has Hirsch length $\leq2$. 
Hence it is abelian, and so has finite index in $\sqrt\pi$.
Thus proper quotients of a $\mathbb{S}ol^3$-group $\pi$
either have two ends or are finite.

\section{the mod-$2$ cohomology ring}

Martins has constructed an explicit free resolution $P_*\to\mathbb{Z}$
of the augmentation $\mathbb{Z}[\pi]$-module, 
and a diagonal approximation $\Delta:P_*\to{P_*\otimes{P_*}}$, 
which he used to compute the integral and mod-$p$ cohomology rings
for semidirect products 
$\pi\cong\mathbb{Z}^2\rtimes_\Theta\mathbb{Z}$ with $\Theta\in{GL(2,\mathbb{Z})}$ \cite{Ma12}.

We shall take a somewhat different approach, 
first computing cup products into $H^2(\pi;\mathbb{F}_2)$ 
and then using Poincar\'e duality.
Our strategy in determing relations in $H^2(\pi;\mathbb{F}_2)$ 
shall be to use restrictions to subgroups (such as $\sqrt\pi$)
and epimorphisms to quotient groups
(such as $\pi/\sqrt\pi$ or small finite 2-groups),
with known cohomology rings.

We shall usually write $H_*(X)$ and $H^*(X)$ for the homology 
and cohomology of a space or group $X$, 
with coefficients $\mathbb{F}_2$,
and denote the cup-product by juxtaposition.
In each case considered below, 
the given generators for a group $G$ represent a basis for $H_1(G)$, 
and we shall use the corresponding Kronecker dual bases for 
$H^1(G)=Hom(H_1(G),\mathbb{F}_2)$.

\begin{lemma}
Let $w=w_1(\pi)$. Then $w\alpha\beta=\alpha^2\beta+\alpha\beta^2$,
for all $\alpha, \beta\in{H^1}(\pi)$.
In particular, if $w=0$ then $\alpha^2\beta=\alpha\beta^2$ and
$(\alpha+\beta)^3=\alpha^3+\beta^3$.
\end{lemma}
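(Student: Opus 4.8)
The plan is to deduce the identity from Wu's formula. A closed $\mathbb{S}ol^3$-manifold $M$ is aspherical, so $H^*(\pi;\mathbb{F}_2)\cong H^*(M;\mathbb{F}_2)$ and $w=w_1(\pi)$ is the first Stiefel--Whitney class of $M$. Being a closed $3$-manifold, $M$ is $\mathbb{F}_2$-orientable, so Poincar\'e duality holds with $\mathbb{F}_2$-coefficients and the total Wu class $v=1+v_1+v_2+\cdots$ is defined, characterized by $v_i\cup x=\mathrm{Sq}^i x$ for every $x$ of complementary degree, with $v_i=0$ whenever $2i>\dim M=3$. Hence $v=1+v_1$ and $w(M)=\mathrm{Sq}(v)=\mathrm{Sq}(1+v_1)=1+v_1+v_1^2$, so that $w=v_1$. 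The first point to nail down is therefore this low-dimensional bookkeeping: that $v_i=0$ for $i\ge 2$ and that this forces $w_1(M)=v_1$.

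Granting that, I would apply the defining property of $v_1$ in degree $2$: for any $x\in H^2(\pi;\mathbb{F}_2)$ one has $w\cup x=v_1\cup x=\mathrm{Sq}^1x$ in $H^3(M;\mathbb{F}_2)\cong\mathbb{F}_2$ (the Kronecker pairing with the fundamental class being an isomorphism there, so it suffices to check the identity after pairing with $[M]$). Now take $x=\alpha\beta$ with $\alpha,\beta\in H^1(\pi;\mathbb{F}_2)$. By the Cartan formula, together with the fact that $\mathrm{Sq}^1$ is the squaring map on $H^1$, $\mathrm{Sq}^1(\alpha\beta)=(\mathrm{Sq}^1\alpha)\beta+\alpha(\mathrm{Sq}^1\beta)=\alpha^2\beta+\alpha\beta^2$. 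Combining the two computations yields $w\alpha\beta=\alpha^2\beta+\alpha\beta^2$.

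The two consequences are then immediate in characteristic $2$: if $w=0$ the left side vanishes, so $\alpha^2\beta=\alpha\beta^2$, and expanding $(\alpha+\beta)^3$ by the binomial theorem modulo $2$ gives $\alpha^3+\alpha^2\beta+\alpha\beta^2+\beta^3=\alpha^3+\beta^3$. I do not foresee a real obstacle: the argument is a direct application of standard facts about Wu classes and Steenrod operations, and the only care needed is the identification $w=v_1$ in the first step. (One could instead argue entirely on the algebraic side, treating $\pi$ as a $PD_3$-group with $\mathbb{F}_2$-orientation class and a chain-level diagonal approximation, but passing through the manifold $M$ and Wu's theorem is cleaner.)
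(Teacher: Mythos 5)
Your argument is correct and is essentially the paper's own proof: the paper likewise derives the identity from the Wu relation $Sq^1z=w\cup z$ for $z\in H^{n-1}$ of a $PD_n$-complex, combined with the Cartan formula; you have merely unpacked the identification $w_1=v_1$ that the paper takes as known. The concluding deductions for $w=0$ match as well.
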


\begin{proof}
The first assertion follows from the Wu relation
$Sq^1z=w\cup{z}$ for all $z\in{H}^{n-1}(X)$,
which holds for any $PD_n$-complex $X$.
The second follows easily.
\end{proof}

If $G$ is a group let $X^n(G)=\langle{g^n}|g\in{G}\rangle$ 
be the subgroup generated by all $n^{th}$ powers.
The next lemma is a refinement of Theorem 2 of \cite{Hi87}
(which is restated here  as part (1) of the lemma).

\begin{lemma}
Let $G$ be a group, and $\rho,\phi,\psi\in{H^1}(G)$.
Let $K=\mathrm{Ker}(\rho)$ and $L=K\cap\mathrm{Ker}(\phi)$.
Then
\begin{enumerate}

\item the kernel of cup product from the symmetric product $\odot^2H^1(G)$
to $H^2(G)$ is the dual of $X^2(G)/X^4(G)[G,X^2(G)]$;

\item the canonical projections  induce isomorphisms

\noindent$H^1(G/X^2(K))\cong{H^1}(G/X^2(L))\cong{H^1}(G/X^4(G))\cong{H^1}(G)$;

\item $\rho\phi=0$ in $H^2(G)\Leftrightarrow 
\rho\phi=0$ in $H^2(G/X^2(K))$;

\item $\phi^2=\rho\phi+\rho\psi$ in $H^2(G)
\Leftrightarrow\phi^2=\rho\phi+\rho\psi$ in $H^2(G/X^2(L))$.
\end{enumerate}
\end{lemma}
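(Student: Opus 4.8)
The plan is to obtain (2) from an elementary remark about subgroups, (1) from the five‑term exact sequence of a well‑chosen extension, and (3)--(4) from (1) together with an explicit form of the transgression. For (2): every homomorphism to $\mathbb{F}_2$ annihilates all squares, so $X^2(K)$, $X^2(L)$ and $X^4(G)$ all lie in $X^2(G)$, and $X^2(G)$ lies in $[G,G]X^2(G)$, the common kernel of $H^1(G)$ (indeed $[G,G]\subseteq X^2(G)$, since $[g,h]=g^{-2}(gh^{-1})^2h^2$). Hence the projections $G\to G/X^4(G)\to G/X^2(L)\to G/X^2(K)$ all induce isomorphisms on $H_1(-;\mathbb{F}_2)$, and therefore on $H^1(-;\mathbb{F}_2)=\mathrm{Hom}(H_1(-;\mathbb{F}_2),\mathbb{F}_2)$.

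For (1), put $V=G/X^2(G)$; this equals $H_1(G;\mathbb{F}_2)$ (again because $[G,G]\subseteq X^2(G)$) and so is elementary abelian, whence $H^2(V;\mathbb{F}_2)=\odot^2H^1(V)$ canonically, and inflation along $G\to V$ identifies this with the cup‑product map $\mu\colon\odot^2H^1(G)\to H^2(G)$. In the five‑term exact sequence $0\to H^1(V)\to H^1(G)\to H^1(X^2(G))^V\xrightarrow{d_2}H^2(V)\to H^2(G)$ of $1\to X^2(G)\to G\to V\to1$, the first arrow is an isomorphism (homomorphisms to $\mathbb{F}_2$ kill $X^2(G)$), so $d_2$ is injective with image $\ker\mu$. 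Since $H^1(X^2(G))^V=\mathrm{Hom}\bigl(X^2(G)/[G,X^2(G)]X^2(X^2(G)),\mathbb{F}_2\bigr)$, it remains to check $[G,X^2(G)]X^2(X^2(G))=[G,X^2(G)]X^4(G)$: for $\supseteq$ use $g^4=(g^2)^2$, and for $\subseteq$ write a square in $X^2(G)$ as a product of squares $g_i^2$ and reorder it modulo commutators, which lie in $[X^2(G),X^2(G)]\subseteq[G,X^2(G)]$ -- a subgroup normal in $G$ and modulo which $X^2(G)$ is central. Thus $\ker\mu\cong\bigl(X^2(G)/X^4(G)[G,X^2(G)]\bigr)^\vee$, which is (1). (This recovers Theorem 2 of \cite{Hi87}.)

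In (3) and (4) the implication $\Leftarrow$ is immediate, since by (2) the classes involved already lie in $H^1$ of the relevant quotient and inflation is a ring homomorphism. For $\Rightarrow$ I use that $d_2$ sends a $G$‑invariant $\chi\colon X^2(G)\to\mathbb{F}_2$ to the class of the pushout central extension $1\to\mathbb{F}_2\to G/\ker\chi\to V\to1$: under the identification of $H^2(V;\mathbb{F}_2)$ with quadratic forms on $V$ (valid because $H^*(V;\mathbb{F}_2)$ is the polynomial algebra on $H^1(V)$), the form attached to $d_2(\chi)$ is $\bar g\mapsto\chi(g^2)$, because the square of the lift of $\bar g\in V$ in $G/\ker\chi$ is the image of $g^2$. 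Hence if $c\in\odot^2H^1(G)$ lies in $\ker\mu$, the unique $\chi$ with $d_2(\chi)=c$ satisfies $\chi(g^2)=q_c(\bar g)$ for all $g$, where $q_c$ is the quadratic form of $c$ (so $q_{\alpha\odot\beta}(\bar g)=\alpha(g)\beta(g)$). Taking $c=\rho\odot\phi$, so $q_c(\bar g)=\rho(g)\phi(g)$, for (3), and $c=\phi\odot\phi+\rho\odot(\phi+\psi)$, so $q_c(\bar g)=\phi(g)+\rho(g)(\phi+\psi)(g)$, for (4), one sees that $q_c$ vanishes on the image of $K=\ker\rho$, resp.\ of $L=\ker\rho\cap\ker\phi$, in $V$; so $\chi$ kills $X^2(K)$, resp.\ $X^2(L)$. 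By naturality of the five‑term sequence for $G\to G/X^2(K)$, resp.\ $G\to G/X^2(L)$, the class $c$ then lies in the image of the transgression of that quotient, i.e.\ in $\ker(\mathrm{inf}\colon H^2(V)\to H^2(G/X^2(K)))$, resp.\ $\to H^2(G/X^2(L))$; inflating $c$ yields exactly $\rho\phi=0$ in $H^2(G/X^2(K))$, resp.\ $\phi^2=\rho\phi+\rho\psi$ in $H^2(G/X^2(L))$.

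The main obstacle is making the transgression -- equivalently, the isomorphism of (1) -- explicit and natural enough to read off the vanishing of $\chi$ on $X^2(K)$ and $X^2(L)$ and to transport it to the quotient groups; the group‑theoretic identity $[G,X^2(G)]X^2(X^2(G))=[G,X^2(G)]X^4(G)$ used in (1) also needs a little care. (For (3) alone one can avoid the transgression: by the Gysin sequence of the double cover determined by $\rho$, $\rho\phi=0$ in $H^2(G)$ iff $\phi$ is a transfer from $H^1(\ker\rho;\mathbb{F}_2)$, and the transfer is natural under $G\to G/X^2(K)$; but the transgression argument treats (3) and (4) at once.)
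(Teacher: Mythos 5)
Your proof is correct, but it reaches parts (1), (3) and (4) by a different route from the paper. For (1) the paper simply quotes Theorem 2 of \cite{Hi87}, whereas you rederive it from the five-term exact sequence of $1\to X^2(G)\to G\to V\to1$ with $V=G/X^2(G)$; this is a clean argument, though your identification of cup product with inflation from $H^2(V)$ rests on $H^2(V;\mathbb{F}_2)=\odot^2H^1(V)$, which requires $V$ (equivalently $H_1(G;\mathbb{F}_2)$) to be finite-dimensional --- for infinite rank there are quadratic forms, hence classes in $H^2(V)$, outside the image of $\odot^2H^1(V)$. Every group to which the lemma is applied in the paper is finitely generated, so this costs nothing here, but it is a hypothesis worth flagging. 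For (3) and (4) the paper argues directly with cochains: if $\rho\phi=0$ in $H^2(G)$ there is a $1$-cochain $F$ with $\delta F(g,h)=\rho(g)\phi(h)$, and one checks that $F$ restricts to a homomorphism on $K$ and is constant on cosets of $X^2(K)$, so the coboundary relation descends to $G/X^2(K)$. Your transgression/quadratic-form argument is a structural repackaging of the same computation: your $\chi$ is essentially $F|_{X^2(G)}$, and the identity $\chi(g^2)=q_c(\bar g)$ is the paper's observation $F(g^2)=\delta F(g,g)=\rho(g)\phi(g)$ in disguise. What your version buys is a uniform treatment of (3) and (4) and an explicit link to part (1); what it costs is the explicit form of $d_2$ and the classification of central extensions of elementary abelian $2$-groups, where the paper needs only two lines of cocycle algebra.
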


\begin{proof}
Part (1) is Theorem 2 of \cite{Hi87}, while part (2) is clear.

If $\phi\psi=0$ in $H^2(G)$ then there is a 1-cochain $F:G\to\mathbb{F}_2$
such that $\phi(g)\psi(h)=\delta{F}(g,h)=F(gh)+F(g)+F(h)$, 
for all $g,h\in{G}$.
Part (3) follows easily, 
since $F$ restricts to a homomorphism on $K$,
and is constant on cosets of $X^2(K)$.

Part (4) is similar.
\end{proof}

In many of the cases considered here,
the coefficients in the linear relations determining 
the kernel of cup product may be found by restricting
to 2-generator subgroups.
However, this is not always enough to determine the triple products
in $H^3(\pi)$.

\begin{lemma}
Let $\{T,Y\}$ be the basis for $H^1(D_8)$ corresponding to the presentation
$D_8=\langle{t,y}|t^2=y^4=1,~tyt^{-1}=y^{-1}\rangle$.
Then $(T+Y)Y=0$ in $H^2(D_8)$.
\end{lemma}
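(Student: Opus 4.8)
The plan is to exhibit $(T+Y)Y$ as the inflation of the characteristic class of a central extension, since such inflations always vanish. Let $Z=\langle y^2\rangle$. As $y^{-2}=[t,y]$, the subgroup $Z$ lies in the commutator subgroup of $D_8$; and since $D_8/Z\cong(Z/2Z)^2$ is elementary abelian, $Z$ is in fact the whole Frattini subgroup, so the quotient map $q\colon D_8\to D_8/Z\cong(Z/2Z)^2$ induces an isomorphism $q^*\colon H^1((Z/2Z)^2)\to H^1(D_8)$. If $\{\widetilde T,\widetilde Y\}$ is the basis of $H^1((Z/2Z)^2)$ Kronecker dual to the images of $t$ and $y$, then $q^*\widetilde T=T$ and $q^*\widetilde Y=Y$, so it suffices to prove that $q^*\bigl((\widetilde T+\widetilde Y)\widetilde Y\bigr)=0$ in $H^2(D_8)$.

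Now consider the central extension $1\to Z/2Z\to D_8\xrightarrow{q}(Z/2Z)^2\to1$ and let $\chi\in H^2((Z/2Z)^2;\mathbb{F}_2)$ be its characteristic class. The pullback of this extension along $q$ is the extension $1\to Z/2Z\to D_8\times_{(Z/2Z)^2}D_8\to D_8\to1$, which is split by the diagonal homomorphism; hence $q^*\chi=0$. It remains only to identify $\chi$. Under the standard identification of $H^2((Z/2Z)^2;\mathbb{F}_2)$ with the quadratic forms on the $\mathbb{F}_2$-space $(Z/2Z)^2$ — in which $\widetilde T^2,\widetilde Y^2,\widetilde T\widetilde Y$ correspond to the forms $\bar v\mapsto v_1,\ v_2,\ v_1v_2$ — the class $\chi$ is the form $\bar v\mapsto[\widetilde v^2]$, where $\widetilde v$ is any lift of $\bar v$ to $D_8$ and $[\,\cdot\,]$ denotes the image in $Z=Z/2Z$. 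Reading this off the presentation, $t^2=1$, $y^2$ generates $Z$, and $(ty)^2=1$, so $\chi$ takes the values $0,\,1,\,0$ at the images of $t$, $y$, $ty$; solving for the coefficients gives $\chi=\widetilde Y^2+\widetilde T\widetilde Y=(\widetilde T+\widetilde Y)\widetilde Y$. Therefore $(T+Y)Y=q^*\chi=0$.

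Everything here is formal except the identification of $\chi$, so the one place to be careful is the dictionary among degree-two classes on $(Z/2Z)^2$, quadratic forms, and squares of lifts, together with the bookkeeping that matches the dual basis correctly (so that $q^*\widetilde Y=Y$, not $q^*\widetilde T=Y$). As an independent check one can instead restrict $(T+Y)Y$ to the two Klein four-subgroups $\langle t,y^2\rangle$ and $\langle ty,y^2\rangle$: it restricts to $0$ in each, since $Y$ restricts trivially to the first and $T+Y$ restricts trivially to the second. Since these subgroups contain every involution of $D_8$ and $H^*(D_8;\mathbb{F}_2)$ is reduced, this observation already forces $(T+Y)Y=0$, and so gives an alternative proof.
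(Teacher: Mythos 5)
Your proof is correct, but it takes a genuinely different route from the paper's. The paper proves the relation by pulling back along the epimorphism $f:D_\infty\to D_8$ with $f(u)=t$, $f(v)=ty$: since $H^*(D_\infty)\cong\mathbb{F}_2[U,V]/(UV)$ and $f$ induces an isomorphism $D_\infty/X^4(D_\infty)\cong D_8$, the unique quadratic relation in $H^2(D_8)$ is detected in $H^2(D_\infty)$, where it becomes $UV=0$; unwinding the dual bases gives $(T+Y)Y=0$. You instead identify $(T+Y)Y$ as the inflation of the extension class of the Frattini central extension $1\to\langle y^2\rangle\to D_8\to(Z/2Z)^2\to 1$, which vanishes for the formal reason you give (the pullback along $q$ splits via the diagonal), and you compute that class via the standard dictionary between $H^2((Z/2Z)^2;\mathbb{F}_2)$ and quadratic forms, reading off $\chi$ from the values of $\tilde v^2$ at $t$, $y$, $ty$. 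Both arguments are short and sound; what yours buys is self-containment (no appeal to the ring $H^*(D_\infty)$) and generality -- exactly the same squaring-form computation, applied to the extension $1\to X^2(E)\to E\to(Z/2Z)^3\to1$, recovers the relation $TU+U^2+V^2=0$ of the paper's Lemma~6 in one stroke, whereas the paper there resorts to restriction to four subgroups. What the paper's route buys is that it stays entirely within the toolkit already set up in Lemma~4 (quotients by $X^4$ and known rings of small groups). Two minor cautions: your identification of the extension class with the quadratic form $\bar v\mapsto[\tilde v^2]$ is standard but is the one unproved ingredient, as you note; and your alternative check via the two Klein four-subgroups quietly invokes detection of $H^*(D_8;\mathbb{F}_2)$ on elementary abelian subgroups (equivalently, that the ring is reduced), which presupposes more knowledge of $H^*(D_8)$ than the main argument needs, so it should be regarded only as a cross-check.
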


\begin{proof}
Let $D_\infty$ have the presentation $\langle{u,v}|u^2=v^2=1\rangle$,
and let $U,V$ be the dual basis for $H^1(D_\infty)$.
Then $H^*(D_\infty)=\mathbb{F}_2[U,V]/(UV)$.
Let $f:D_\infty\to{D_8}$ be the epimorphism given by $f(u)=t$
and $f(v)=ty$.
Then $f$ induces an isomorphism $D_\infty/X^4(D_\infty)\cong{D_8}$,
so  $H^2(f)$ is injective.
Since $f^*U=T+Y$ and $f^*V=Y$, 
we see that $(T+Y)Y=0$ in $H^2(D_8)$.
\end{proof}

Let $E$ be the ``almost extraspecial" 2-group with presentation
\[
\langle{t,u,v}\mid{t^2=1},~u^2=v^2,~tut^{-1}=u^{-1},~tv=vt,~uv=vu\rangle.
\]

\begin{lemma}
Let $\{T,U,V\}$ be the basis for $H^1(E)$ 
corresponding to the above presentation.
Then $TU+U^2+V^2=0$ in $H^2(E)$.
\end{lemma}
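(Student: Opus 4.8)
The plan is to exhibit the relation as the pull‑back of the class classifying a central extension. First I would pin down the structure of $E$. The relation $tv=vt$ gives $tv^2t^{-1}=v^2$, while $tut^{-1}=u^{-1}$ together with $u^2=v^2$ gives $tv^2t^{-1}=tu^2t^{-1}=u^{-2}=v^{-2}$; hence $v^4=1$, so that $u^4=v^4=1$ and $|E|=16$. Moreover $z:=u^2=v^2$ has order $2$, and $[t,u]=tut^{-1}u^{-1}=u^{-2}$, which equals $z$ since $z^2=1$, while $[t,v]=[u,v]=1$. Therefore $[E,E]=\langle z\rangle$, and so $E^{ab}=E/\langle z\rangle\cong(Z/2Z)^3$. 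In particular $\{T,U,V\}$ is the inflation to $H^1(E)$ of the dual basis of $H^1((Z/2Z)^3)$.

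Next I would feed the central extension $1\to\langle z\rangle\to E\to Q\to1$, with $Q:=E^{ab}\cong(Z/2Z)^3$ and $H^*(Q)=\mathbb{F}_2[T,U,V]$, into the Lyndon--Hochschild--Serre spectral sequence, in which $E_2^{p,q}=H^p(Q;\mathbb{F}_2)$ for all $q\geq0$ since the action is trivial. The only differential reaching $E_*^{2,0}$ is the transgression $d_2\colon E_2^{0,1}\to E_2^{2,0}$, which carries the canonical generator of $H^1(\langle z\rangle;\mathbb{F}_2)$ to the class $k\in H^2(Q;\mathbb{F}_2)$ classifying the extension. Hence the kernel of the inflation $H^2(Q)\to H^2(E)$ is exactly $\langle k\rangle$, and in particular $k$ restricts to $0$ in $H^2(E)$.

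It then remains to identify $k$. For any central extension of $(Z/2Z)^n$ by $Z/2Z$, the coefficient of $x_i^2$ in $k$, expanded in the polynomial generators $x_1,\dots,x_n$ of $H^*((Z/2Z)^n;\mathbb{F}_2)$, is nonzero precisely when a lift of the $i$th generator squares to the central element, and the coefficient of $x_ix_j$ (for $i<j$) is nonzero precisely when lifts of the $i$th and $j$th generators fail to commute. For $E$, taking the lifts $t,u,v$, we have $t^2=1$, $u^2=v^2=z$, $[t,u]=z$ and $[t,v]=[u,v]=1$, so $k=TU+U^2+V^2$. Since inflation is a ring homomorphism fixing $T,U,V$ in degree $1$, we conclude $TU+U^2+V^2=0$ in $H^2(E;\mathbb{F}_2)$.

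The step I expect to be the real content is the identification of $k$: one must use that $z$ has order $2$ (equivalently $u^4=1$), since otherwise $[t,u]=u^{-2}$ would not equal $z$ and the coefficient of $TU$ would be wrong; granting that, the rest is bookkeeping. As a consistency check, Lemma 5 shows that $TU+U^2+V^2$ restricts to $(T+Y)Y=0$ on the subgroup $\langle t,u\rangle\cong D_8$, but a single such restriction does not force the relation in $H^2(E)$. Alternatively one could avoid the spectral sequence by combining restrictions to $\langle t,u\rangle\cong D_8$, to the copy of $Q_8$ inside $E$, and to cyclic subgroups of order $2$, together with the observation (from Lemma 4(1)) that $X^2(E)/X^4(E)[E,X^2(E)]=\langle z\rangle$ is one‑dimensional, so that $\ker(\odot^2H^1(E)\to H^2(E))$ is spanned by a single quadratic form; these constraints also pin it down to $TU+U^2+V^2$.
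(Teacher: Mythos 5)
Your proof is correct, but it takes a genuinely different route from the paper's. The paper quotes Lemma 4(1) (i.e.\ Theorem 2 of \cite{Hi87}) to see that $X^2(E)\cong Z/2Z$ forces the kernel of $\odot^2H^1(E)\to H^2(E)$ to be one-dimensional, and then pins down the six coefficients of the unique relation by restricting to the four subgroups $\langle t\rangle$, $\langle t,u\rangle\cong D_8$, $\langle t,v\rangle$ and $\langle u,v\rangle\cong Z/4Z\oplus Z/2Z$. You instead observe that the cup product $\odot^2H^1(E)\to H^2(E)$ factors through inflation from $Q=E^{ab}\cong(Z/2Z)^3$, identify the kernel of that inflation as the span of the extension class $k$ of $1\to\langle z\rangle\to E\to Q\to 1$ via the five-term exact sequence of the Lyndon--Hochschild--Serre spectral sequence, and read $k=TU+U^2+V^2$ off the standard dictionary between central extensions of elementary abelian $2$-groups by $Z/2Z$ and quadratic forms (squares of lifts give the diagonal coefficients, commutators of lifts the off-diagonal ones). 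Your route produces the relation in one step, with no search for suitable subgroups, and as a by-product re-proves the one-dimensionality of the kernel that the paper imports from \cite{Hi87}; its only extra overhead is the preliminary verification that $u^4=1$ so that $z=u^2=v^2=[t,u]$ generates $[E,E]$ with $E^{ab}\cong(Z/2Z)^3$, which you do carry out. The paper's argument is more elementary (no spectral sequence) and matches its declared strategy of exploiting restrictions to subgroups with known cohomology; your closing remark correctly notes that the two methods can be blended, and your consistency check against Lemma 5 (restriction to $\langle t,u\rangle\cong D_8$ kills $V$ and recovers $(T+U)U=0$) is apt.
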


\begin{proof}
Since $X^2(E)\cong{Z/2Z}$, the kernel of cup product from
$\odot^2H^1(G)$ to $H^2(G)$ has dimension 1 \cite{Hi87}.
Thus there is an unique nontrivial linear relation
$aT^2+bU^2+cV^2+dTU +eTV +fUV=0$ in $H^2(E)$.
The coefficients can be determined by restriction to the subgroups
$\langle{t}\rangle\cong{Z/4Z}$,
$\langle{t,u}\rangle\cong{D_8}$,
$\langle{t,v}\rangle\cong{Z/4Z}\oplus{Z/2Z}$,
and $\langle{u,v}\rangle\cong{Z/4Z}\oplus{Z/2Z}$.
\end{proof}

\section{ mapping tori}

Suppose that $\pi\cong\mathbb{Z}^2\rtimes_\Theta\mathbb{Z}$,
where $\Theta=\left(\begin{smallmatrix}
a&c\\ 
b&d
\end{smallmatrix}\right)\in{GL}(2,\mathbb{Z})$.
Let $\varepsilon=ad-bc=\pm1$ and $\tau=a+d$.
Let $\Delta_1=\det(\Theta-I_2)=1-\tau+\varepsilon$ and $\Delta_2=(a-1,b,c,d-1)$
be the elementary divisors of $\Theta-I_2$.
Then $\Delta^2_2$ divides $\Delta_1$, and 
\[
\pi^{ab}\cong\mathbb{Z}\oplus {Z/(\Delta_1/\Delta_2)Z}\oplus{Z/\Delta_2Z}.
\]
Let $\beta=\beta_1(\pi;\mathbb{F}_2)$.
Then $1\leq\beta\leq3$, and $\beta_2(\pi;\mathbb{F}_2)=\beta$,
by Poincar\'e duality.
Let $\rho:\pi\to{Z/2Z}$ be the unique epimorphimorphism
which factors through $\pi/\sqrt\pi\cong\mathbb{Z}$.
If $\pi$ is non-orientable then $\rho=w_1(M)$,
and $K=\pi^+$, the maximal orientable subgroup of $\pi$. 

1. If $\tau$ is odd then $\Delta_1$ is odd and
$\pi^{ab}\cong\mathbb{Z}\oplus{odd}$.
In this case $\rho$ is the unique epimorphism 
from $\pi$ to ${Z/2Z}$, 
and 
\[H^*(\pi)\cong\mathbb{F}_2[\rho,\Xi]/(\rho^2,\Xi^2),\]
where $\Xi$ has degree 2, by Poincar\'e duality.

2. If $\tau\equiv\varepsilon-1$ {\it mod} $(4)$ then 
$\pi^{ab}\cong\mathbb{Z}\oplus{Z/2Z}\oplus{odd}$, and $\beta=2$.
Hence $H^1(\pi)=\langle{\rho,\sigma}\rangle$,
where $\sigma$ does not factor through $Z/4Z$.
Moreover, 
if $G=\pi/X^4(\pi)$ then $X^2(G)\cong(Z/2Z)^2$ is central in $G$.
Thus $\rho^2=\rho\sigma=0$, by Lemma 4, while $\sigma^2\not=0$.
Hence $H^2(\pi)=\langle\sigma^2,\Xi\rangle$,
for some $\Xi$ of degree 2.
Duality then implies that $\sigma^3=\rho\Xi\not=0$.
We may assume also that $\sigma\Xi=0$,
and so
\[
H^*(\pi)\cong\mathbb{F}_2[\rho,\sigma,\Xi]/
(\rho^2,\rho\sigma,\sigma\Xi,\rho\Xi+\sigma^3,\Xi^2).
\]

3. If $\tau\equiv\varepsilon+1$ {\it mod} (4) and $\Delta_2$ is odd
then $\pi^{ab}\cong\mathbb{Z}\oplus{Z/2^kZ}\oplus{odd}$, 
for some $k\geq2$.
Hence $H^1(\pi)=\langle\rho,\sigma\rangle$,
where $\sigma^2=\rho^2=0$. 
Since $\rho\sigma=0$, by the nondegeneracy
of Poincar\'e duality, 
\[
H^*(\pi)\cong\mathbb{F}_2[\rho,\sigma,\Xi,\Omega]/
(\rho^2,\rho\sigma,\sigma^2,\rho\Omega,\sigma\Xi,\rho\Xi+\sigma\Omega,
\Xi^2,\Omega^2,\Xi\Omega),
\]
where $\Xi$ and $\Omega$ have degree 2.

In all the remaining cases $\beta=3$.
For if $\tau\equiv\varepsilon+1$ {\it mod} $(4)$ 
and $\Delta_2$ is even
then $a$ and $d$ are odd and $b$ and $c$ are even.
Hence $\Delta_1=2^kq$ and $\Delta_2=2^\ell{q'}$,
where $0<\ell\leq\frac{k}2$ and $q, q'$ are odd.
In this case $\pi^{ab}\cong\mathbb{Z}\oplus{Z/2^{k-\ell}Z}\oplus{Z/2^\ell{Z}}\oplus{odd}$,
so the images of $\{t,x,y\}$ form a basis for $H_1(\pi)$.
Let $\{\rho,\sigma,\psi\}$ be the dual basis, so that
\[
\sigma(x)=\psi(y)=1\quad\mathrm{and}\quad
\sigma(t)=\sigma(y)=\psi(t)=\psi(x)=0.
\]
If $G=\pi/X^4(\pi)$ then 
$X^2(G)=\langle{t^2,x^2,y^2}\rangle\cong(Z/2Z)^3$ is central in $G$,
so the kernel of cup product from $\odot^2H^1(\pi)$ to $H^2(\pi)$ has rank 3.
It then follows from Poincar\'e duality that $H^*(\pi)$ 
is generated as a ring by $H^1(\pi)$.
In each case,
$\rho\sigma^2=\rho\rho\sigma=0$ and $\rho\psi^2=\rho\rho\psi=0$,
by Lemma 3.
Hence $\rho\sigma\psi\not=0$,
by the nondegeneracy of Poincar\'e duality.
It then follows easily that $\rho\sigma$, $\rho\psi$ and $\sigma\psi$ are
linearly independent,
and so form a basis for $H^2(\pi)$.
We may write
\[
\sigma^2=m\rho\sigma+n\rho\psi+p\sigma\psi\quad
\mathrm{
and}\quad\psi^2=q\rho\sigma+r\rho\psi+s\sigma\psi,
\]
for some $m,\dots,s$.
On restricting to $\sqrt\pi$, we see that $p=s=0$,
since $\sigma^2|_{\sqrt\pi}=\psi^2|_{\sqrt\pi}=0$ and $\rho|_{\sqrt\pi}=0$,
while $\sigma\psi|_{\sqrt\pi}\not=0$.
Since $\rho\sigma^2=\rho^2\sigma=\rho\psi^2=\rho^2\psi=0$,
taking cup products with $\sigma$ and $\psi$ gives
\[
\sigma^3=n\rho\sigma\psi,\quad\sigma^2\psi=m\rho\sigma\psi,
\quad\psi^3=q\rho\sigma\psi\quad\mathrm{and}\quad\sigma\psi^2=r\rho\sigma\psi.
\]

4. If $\ell\geq2$ then $a\equiv{d}\equiv1$ and $b,c\equiv0$ {\it mod} (4), 
so $\varepsilon\equiv1$ {\it mod} (4) also,
i.e., $\pi$ is orientable.
In this case $\sigma^2=\psi^2=\rho^2=0$, and so
\[
H^*(\pi)\cong\mathbb{F}_2[\rho,\sigma,\psi]/(\rho^2,\sigma^2,\psi^2).
\]

Suppose now that $\ell=1$.

5. If $\pi$ is orientable and $\Delta_1\equiv0$ {\it mod} (8) 
we may assume that one of $\sigma$, $\psi$ or $\sigma+\psi$ 
factors through $Z/4Z$.
Thus either $\sigma^2=0$, $\psi^2=0$ or $\sigma^2=\psi^2$.
We may assume that $\sigma^2\not=0$.
Then $\rho\sigma^2=\rho^2\sigma=0$
and $\psi\sigma^2=\psi^2\sigma=0$, and so $\sigma^3\not=0$,
by the nonsingularity of Poincar\'e duality.
Hence
\[
H^*(\pi)\cong\mathbb{F}_2[\rho,\sigma,\psi]/(\rho^2,\rho\psi+\sigma^2,\psi^2).
\]
In this case we see that $\phi^3=0\Leftrightarrow\phi^2=0$.

If $\pi$ is orientable and $\Delta_1\equiv4$ {\it mod} (8) 
then $\tau\equiv6$ {\it mod} (8) and $a,d$ are odd,
and so $a\equiv{d}$ {\it mod} (4).
In this case $\psi^2\not=0$ and $(\sigma+\psi)^2\not=0$ also,
and so $\sigma^2=m\rho\sigma+n\rho\psi$ and 
$\psi^2=q\rho\sigma+r\rho\psi$ are linearly independent.
Hence $mr+nq=1$ in $\mathbb{F}_2$.
Since $w=0$, $\sigma^2\psi=\sigma\psi^2$ and so $m=r$.

6. Suppose first that $a\equiv1$ {\it mod} (4).
Then $bc\equiv4$ {\it mod} (8), and so $b\equiv{c}\equiv2$ {\it mod} (4).
Let $L_\phi=\mathrm{Ker}(\rho)\cap\mathrm{Ker}(\phi)$.
Then $\pi/X^2(L_\phi)$ has a presentation
\[
\langle {t,x,y}\mid t^4=x^4=y^2=1,~tx=xt,~tyt^{-1}=x^2y,~xy=yx\rangle.
\]
Let $J=\langle{t,x}\rangle\cong(Z/4Z)^2$.
Then $\sigma^2|_J=\rho\psi|_J=0$,
while $\rho\sigma|_J\not=0$. 
Applying part (3) of Lemma 4, we see that $m=0$,
and so $\sigma^2=\rho\psi$ and $\psi^2=\rho\sigma$.
(Note, however, that Lemma 4 does {\it not\/} assert that the relation
$\psi^2=q\rho\sigma+r\rho\psi$ also holds in $\pi/X^2(L_\phi)$!
For this, 
we could use $L_\psi=\mathrm{Ker}(\rho)\cap\mathrm{Ker}(\psi)$ instead.)
Hence
\[
H^*(\pi)\cong\mathbb{F}_2[\rho,\sigma,\psi]/(\rho^2,\rho\psi+\sigma^2,
\rho\sigma+\psi^2).
\]
In particular, $\sigma^3=\psi^3=(\rho+\sigma)^3=(\rho+\psi)^3\not=0$.

If $a\equiv-1$ {\it mod} (4) then $bc\equiv0$ {\it mod} (8).
If, say, $b\equiv2$ {\it mod} (4) (so $c\equiv0$ {\it mod} (4))
then the change of basis $x'=x$, $y'=xy$
reduces this case to the one just considered.
In terms of the given basis, we have
\[
H^*(\pi)\cong\mathbb{F}_2[\rho,\sigma,\psi]/(\rho^2,
\rho\sigma+\sigma^2,\rho\psi+\sigma^2+\psi^2).
\]
In this case $\sigma^3\not=0$, but $\psi^3=0$.
A similar result holds if $b\equiv0$ {\it mod} (4) 
and $c\equiv2$ {\it mod} (4).

7. If, however, $a\equiv-1$ {\it mod} (4) and $b\equiv{c}\equiv0$ {\it mod} (4)
then $\pi/X^4(\pi)$ has a presentation
\[
\langle {t,x,y}\mid t^4=x^4=y^4=1,~txt^{-1}=x^{-1},~tyt^{-1}=y^{-1},
~xy=yx\rangle.
\]
In this case $J=\langle{t,x}\rangle$ is non-abelian,
and $\sigma^2|_J\not=0$, while $\rho\psi|_J=0$.
Hence we must have $m=r=1$. 
It is clear from the symmetry of the presentation for
$\pi/X^4(\pi)$ that we must also have $n=q$ in this case,
and so $n=q=0$.
Thus 
\[
H^*(\pi)\cong\mathbb{F}_2[\rho,\sigma,\psi]/(\rho^2,
\rho\sigma+\sigma^2,\rho\psi+\psi^2).
\]
We now find that $\phi^3=0$ for all $\phi\in{H^1}(\pi)$.

If $\ell=1$ and $M$ is non-orientable then 
$a$ and $d$ are odd, and $\Delta_1=-a-d\equiv0$ {\it mod} (4).
In this case $\rho=w_1(M)$, 
and so $\sigma^2\psi+\sigma\psi^2=\rho\sigma\psi\not=0$, by Lemma 3.
After swapping $x$ and $y$, if necessary,
we may assume that $a\equiv1$ {\it mod} (4).

8. If $bc\equiv0$ {\it mod} (8) then, 
after a further change of basis of the form $x'=x, y'=xy$ or $x'=xy, y'=y$, 
if necessary, we may assume that $b\equiv{c}\equiv0$ {\it mod} (4).
Then $\sigma^2=0$, and 
$\pi/\langle\langle{t^2,x,y^4}\rangle\rangle\cong{D_8}$,
so $(\rho+\psi)\psi=0$ also.
Hence
\[
H^*(\pi)\cong\mathbb{F}_2[\rho,\sigma,\psi]/
(\rho^2,\sigma^2,\rho\psi+\psi^2).
\]
In particular, $(\sigma+\psi)^3=(\rho+\sigma+\psi)^3\not=0$,
and all other classes have cube 0.
In terms of the given bases, the other cases are:

If ${b}\equiv0$ and $c\equiv2$ {\it mod} (4)
then 
\[
H^*(\pi)\cong\mathbb{F}_2[\rho,\sigma,\psi]/
(\rho^2,\sigma^2+\psi^2,\rho\psi+\psi^2,\sigma^2\psi).
\]
Here $\sigma^3=(\rho+\sigma)^3\not=0$ 
and all other classes have cube 0.

If ${b}\equiv2$ and ${c}\equiv0$ {\it mod} (4) 
then
\[
H^*(\pi)\cong\mathbb{F}_2[\rho,\sigma,\psi]/(\rho^2,
\sigma^2,\psi^2+\rho\sigma+\rho\psi).
\]
Here $\psi^3=(\rho+\psi)^3\not=0$ 
and all other classes have cube 0.

9. If ${b}\equiv{c}\equiv2$ {\it mod} (4)
then $\sigma^2$ and $\psi^2$ are linearly independent.
There are three distinct epimorphisms from $\pi$ to 
the almost extraspecial group $E$,
given by $f(x)=u^{-1}v), f(y)=u$;
$g(x)=v,g(y)=uv^{-1}$; and $h(x)=v, h(y)=u$.
Using these epimorphisms to pull back the relation given in Lemma 5, 
we find that
\[
H^*(\pi)\cong\mathbb{F}_2[\rho,\sigma,\psi]/(\rho^2,
\sigma^2+\rho\psi,\psi^2+\rho\sigma+\rho\psi).
\]
In particular, every epimorphism $\phi\not=\rho$ has nonzero cube.

\section{unions of twisted $I$-bundles}
Suppose that $\pi/\sqrt\pi\cong{D_\infty}$.
Then $\pi$ is orientable, and has a presentation
\[
\langle
u,v,y\mid uyu^{-1}=y^{-1},~v^2=u^{2a}y^b,~
vu^{2c}y^dv^{-1}=u^{-2c}y^{-d}
\rangle,
\]
where $ad-bc=\pm1$ and $abcd\not=0$.
Let $B=\langle{u,y}\rangle$ and $C=\langle{v,u^{2c}y^d}\rangle$.

If $b$ is odd then $\pi^{ab}\cong{Z/4cZ}\oplus{Z/4Z}$,
where the summands are generated by $u$ and $u^{-a}v$, respectively.
Let $U(u)=V(v)=1$, $U(v)=a$ and $V(u)=0$.
Then
\[
H^*(\pi)\cong\mathbb{F}_2[U,V,\Xi,\Omega]/
(U^2,UV,V^2,U\Xi+V\Omega,\Xi^2,\Omega^2,\Xi\Omega),
\]
where $\Xi$ and $\Omega$ have degree 2.

If $b$ is even then $\pi^{ab}\cong{Z/4cZ}\oplus(Z/2Z)^2$ 
and the images of $u,v$ and $y$ represent a basis for $H_1(\pi)$.
Let $\{U,V,Y\}\in{H^1}(\pi)$ be the dual basis.
Then $U^2$, $V^2$ and $Y^2$ are all nonzero, 
but $W=U+V$ lifts to a homomorphism from $\pi$ to $Z/4Z$, and so $W^2=0$.
Hence $U^2=V^2$.
Since $U$ and $V$ are induced from classes in $H^1(D_\infty)$
we have $UV=0$.
We also have $UY|_B=Y^2|_B$ and $VY|_C=Y^2|_C$, while $U|_C,V|_B,U^2|_B$
and $V^2|_C$ are all 0.

Suppose that $pU^2+qY^2+rUY+sVY=0$ in $H^2(\pi)$.
On restricting to the subgroups $B$ and $C$,
we find that $q+r=q+s=0$.
Since $U^2\not=0$ we must have $q=r=s=1$.
Multiplying by $U$ and $V$, 
we find that $UY^2+U^2Y=0$ and $VY^2+V^2Y=0$.
Poincar\'e duality for $\pi$ now implies that $\{U^2,Y^2,UY\}$
is a basis for $H^2(\pi)$, while $UY^2=U^2Y=VY^2$
generates $H^3(\pi)$.
We see also that $U^3=U^2V=UV^2=V^3=(U+V)^3=0$,
while $(U+Y)^3=(V+Y)^3=(U+V+Y)^3=Y^3$.

Suppose first that $b\equiv0$ {\it mod} (4).
Then $G=\pi/\langle\langle{uv,u^2,y^4}\rangle\rangle\cong{D_8}$. 
Hence $(U+V+Y)Y=0$ in $H^3(\pi)$.
It follows easily that $Y^3=0$, and so all cubes are 0 in $H^3(\pi)$.

If $b\equiv2$ {\it mod} (4) then 
$\pi/\langle\langle{u^2,(uv)^2,v^4,y^4}\rangle\rangle$
has a presentation
\[
\langle{u,v,y}\mid{u^2=(uv)^2=v^4=1},~uyu^{-1}=vyv^{-1}=y^{-1},~v^2=y^2\rangle
\]
Hence there is an epimorphism $f:\pi\to{E}$,
given by $f(u)=t$, $f(v)=u$ and $f(y)=u^{-1}t^{-1}v$.
Since $f^*T=U+Y$, $f^*U=V+Y$, $f^*V=Y$ and $UV=0$,
it follows from Lemma 6 that $UY+VY+V^2+Y^2=0$ in $H^2(\pi)$. 
Multiplying by $Y$, we find that $UY^2+Y^3=0$ and so $Y^3\not=0$.
In this case, only the cubes induced from $H^*(\pi/\sqrt\pi)$ are zero.
 
\section{the borsuk-ulam index}

We may identify an epimorphism $\phi$ with a nonzero
class in $H^1(N;\mathbb{F}_2)$.
Then $BU(N,\phi)=1\Leftrightarrow\phi$ lifts to an integral class $\Phi\in{H^1(N;\mathbb{Z})}$,
while
$BU(N,\phi)=n\Leftrightarrow\phi^n\not=0$ in $H^n(N;\mathbb{F}_2)$
In general, $1\leq{BU(N,\phi)}\leq{n}$.
See \cite{GHZ}.
When $n=3$ the remaining possibility is that
$BU(M,\phi)=2\Leftrightarrow\phi^2=0$ 
but $\phi$ is not the reduction of an integral class.

Suppose first that $\pi/\sqrt\pi\cong\mathbb{Z}$.
Then the following results are immediate from \S3.

1. If $\rho:\pi\to{Z/2Z}$ is the unique epimorphism which factors 
through $\pi/\sqrt\pi\cong\mathbb{Z}$ then $BU(M,\rho)=1$.

2. If $\tau\equiv\varepsilon-1$ {\it mod} $(4)$ then 
$BU(M,\phi)=3$ for all $\phi\not=\rho$.

3. If $\tau\equiv\varepsilon+1$ {\it mod} (4) and either
$\Delta_2$ is odd or
$a\equiv{d}\equiv1$ {\it mod} (4) and $b,c$ are 
divisible by 4, 
then $BU(M,\phi)=2$ for all $\phi\not=\rho$.

4. If $\varepsilon=1$, $\Delta_1\equiv0$ {\it mod} (8) 
and $\Delta_2\equiv2$ {\it mod} (4) then $BU(M,\phi)=2$ 
for the two epimorphisms $\phi\not=\rho$ such that $\phi^2=0$
(i.e, that factor through $Z/4Z$) and
$BU(M,\phi)=3$ for the four such that $\phi^2\not=0$. 

5. If $\varepsilon=1$, $\Delta_1\equiv4$ {\it mod} (8) and
$\Theta\equiv-I_2$ {\it mod} (4)
then $BU(M,\phi)=2$ for all $\phi\not=\rho$.

6. If $\varepsilon=1$ and $\Delta_1\equiv4$ {\it mod} (8),
but $\Theta\not\equiv-I_2$ {\it mod} (4),
then $BU(M,\phi)=2$ for the two epimorphisms 
$\phi\not=\rho$ such that $\phi^2=0$ 
and $BU(M,\phi)=3$ for the four such that $\phi^2\not=0$. 

7. If $\varepsilon=-1$, $\tau\equiv0$ {\it mod} (4),
$\Delta_2\equiv2$ {\it mod} (4) and $bc\equiv0$ {\it mod} (8)
then $BU(M,\phi)=2$ for the four epimorphisms $\phi\not=\rho$ such that $\phi^3=0$ and $BU(M,\phi)=3$ for the two such that $\phi^3\not=0$. 

8. If $\varepsilon=-1$, $\tau\equiv0$ {\it mod} (4),
$\Delta_2\equiv2$ {\it mod} (4) and $bc\equiv4$ {\it mod} (8)
then $BU(M,\phi)=3$ for all $\phi\not=\rho$. 

Suppose now that $\pi/\sqrt\pi\cong{D_\infty}$.
Then the following results are immediate from \S4.

9. If $\pi^{ab}\cong{Z/4cZ}\oplus{Z/4Z}$ then
$BU(M,\phi)=2$ for all $\phi$. 

10. If $\pi^{ab}\cong{Z/4cZ}\oplus(Z/2Z)^2$ and $b\equiv0$ {\it mod} (4)
then $BU(M,\phi)=2$ for all $\phi$.

11. If $\pi^{ab}\cong{Z/4cZ}\oplus(Z/2Z)^2$ and $b\equiv2$ {\it mod} (4)
then $BU(M,\phi)=2$ for epimorphisms $\phi$
which factors through $\pi/\sqrt\pi$,
while $BU(M,\phi)=3$ otherwise.

\section{other geometries}

We remark finally that similar arguments may be used to determine the $\mathbb{F}_2$-cohomology rings and Borsuk-Ulam invariants for pairs 
$(N,\phi)$ with $N$ a closed $\mathbb{E}^3$- or $\mathbb{N}il^3$-manifold.
These manifolds are all Seifert fibred over flat 2-orbifolds. 
Since they have been covered in \cite{BGHZ}, 
we shall confine ourselves to some brief observations.

The ten closed flat 3-manifolds may be easily treated individually. 
The only one admitting a class $\phi$ with $\phi^3\not=0$
has group $G_4$, 
with holonomy $Z/4Z$ and abelianization $\mathbb{Z}\oplus{Z/2Z}$.
Thus $H^1(\pi)=\langle{T,X}\rangle$, where $T^2=0$ and $X^2\not=0$.
We may deduce that $TX=0$ also, by mapping $G_4$ onto $D_8$.
It follows easily that 
\[
H^*(G_4)\cong\mathbb{F}_2[T,X,\Omega]/(T^2,TX,X\Omega,
T\Omega+X^3,\Omega^2),
\]
where $\Omega$ has degree 2.
(Thus $X^3=(T+X)^3\not=0$. 
These classes correspond to the two epimorphisms without integral lifts.)

The possible Seifert bases $B$ of closed $\mathbb{N}il^3$-manifolds 
are the seven flat 2-orbifolds with no reflector curves:
$B=T$, $Kb$, $S(2,2,2,2)$, $S(2,4,4)$, $S(2,3,6)$, $S(3,3,3)$ or $P(2,2)$.
Let $\beta=\pi_1^{orb}(B)$ be the orbifold fundamental group of the base. 
Then $\pi^{ab}$ is an extension of $\beta^{ab}$ by a finite cyclic
group $Z/qZ$, if the base is orientable ($B\not=Kb$ or $P(2,2)$),
and by $Z/(2,q)Z$ otherwise.
The ring $H^*(\pi)$ depends only on the base $B$
and the residue of $q$ {\it mod} (4).

If $B=T$ or $Kb$ then $\pi\cong\mathbb{Z}^2\rtimes_\Theta\mathbb{Z}$,
for some $\Theta\in{GL(2,\mathbb{Z})}$.
These are in fact the cases requiring most effort.
In all other cases $\pi^{ab}$ is finite, 
and the projection of $\pi$ onto $\beta$
induces an isomorphism $H_1(\pi)\cong{H_1(\beta)}$.
When $B=S(2,3,6)$ or $S(3,3,3)$ this group is cyclic.
(In particular, 
such $\mathbb{N}il^3$-manifolds are neither mapping tori
nor unions of twisted $I$-bundles.)
When $B=S(2,4,4)$ we have
$\pi/X^4(\pi)\cong\beta/X^4(\beta)\cong{G_4}/X^4(G_4)$.
The cases of $S(2,2,2,2)$ and $P(2,2)$ are related to those of the flat 3-manifolds $G_2$ and $B_4$, respectively.

The Borsuk-Ulam Theorem and its applications and
extensions are treated in detail in the book \cite{Mat}.

\end{document}